\newtheorem{theorem}{Theorem}[section]
\newtheorem{lemma}{Lemma}[section]
\newenvironment{proof}{\smallskip\noindent{\it Proof}}{$\Box$}
\numberwithin{equation}{section}
\newcommand{\balpha}{{\boldsymbol \alpha}}
\newcommand{\bmu}{{\boldsymbol \mu}}
\newcommand{\blambda}{{\boldsymbol \lambda}}
\begin{document}

\title{Superresolution in the maximum entropy\\ approach to invert Laplace transforms} 
\author{Henryk Gzyl,\\
\noindent 
Centro de Finanzas IESA, Caracas, (Venezuela)\\
 henryk.gzyl@iesa.edu.ve}

\date{}
 \maketitle

\setlength{\textwidth}{4in}

\vskip 1 truecm
\baselineskip=1.5 \baselineskip \setlength{\textwidth}{6in}
\begin{abstract}
The method of maximum entropy has proven to be a rather powerful way to solve the inverse problem consisting of determining a probability density $f_S(s)$ on $[0,\infty)$ from the knowledge of the expected value of a few generalized moments, that is, of functions $g_i(S)$ of the variable $S.$ A version of this problem, of utmost relevance for banking, insurance, engineering and the physical sciences, corresponds to the case in which $S \geq 0$ and $g_i(s)=\exp(-\alpha_i s),$ th expected values $E[\exp-\alpha_i S)]$ are the values of the Laplace transform of $S$ the points $\alpha_i$ on the real line.\\
Since inverting the Laplace transform is an ill-posed problem, to devise numerical tecniques that are efficient is of importance for many applications, specially in cases where all we know is the value of the transform at a few points along the real axis.  A simple change of variables transforms the Laplace inversion problem into a fractional moment problem on $[0,1].$ It is remarkable that the maximum entropy procedure allows us to determine the density on $[0,1]$ with high accuracy. In this note, we examine why this might be so.
\end{abstract}
\noindent {\bf Keywords}:  Inverse problem, Laplace transform inversion,  Fractional moment problem, Maximum entropy, Entropy convergence.\\
\noindent{MSC 2010} 65R30, 65R32, 65N21.\\
\section{Introduction and Preliminaries} 
The problem of determining the probability density of a positive random variable from the knowledge of its Laplace transform is quite an ubiquitous problem. It can be stated as
\begin{equation}\label{prob1}
\mbox{Find}\;\;\;\;f_S(s)\;\;\;\mbox{such that}\;\;\;E[e^{-S\alpha_i}] = \int_0^\infty e^{-s\alpha_i}f_S(s)ds = \mu_i\;\;\;i=1...,K.
\end{equation}
A couple of situations in which this problem appears are the following. When determining the distribution of the exit time of a diffusion from a domain $D,$ the left hand side is actually a function $\phi(x, \alpha)$ for $x \in D$ and $\alpha > 0.$ The function $\phi(x,\alpha)$ can be determined numerically by solving a boundary problem or by Montecarlo simulation. In this case one is interested in determining $f_S(x,s)$ from the knowledge of $\phi(x,\alpha)$ at a collection of values of $x\in D$ for a few values of $\alpha.$ See Gzyl and ter Horst (2013) and references therein for example. 

Another class of problems, originating in the insurance and banking industries, in which (\ref{prob1}) is highly relevant, consists of the determination of the probability density of a random variable describing a loss. In this case either from a mathematical model, or from collected data, one computes numerical the left hand side of (\ref{prob1}) for a  small number of values of the parameter $\alpha.$ This problem was addressed recently in Gzyl, Novi-Inverardi and Tagliani (2013) where a comparison between maximum entropy based methods and other techniques was carried out, and in Gomes-Gon\c{c}alves, Gzyl and Mayoral (2015) where a similar problem was addressed, but where the moments were obtained from numerical data.

In all of these references, it was observed that $8$ fractional moments sufficed to obtain very good approximations to the actual or to the empirical densities. The hint as to why that might be true comes from Gzyl and Tagliani. There we observed that in going from $4$ to $8$ moments, the entropy changed very little. Since the convergence in entropy and the convergence in the $\L_1$ norm are related, the fact that the entropy stabilizes rapidly as the number of fractional moments increases, may be responsible of the high accuracy in the reconstructions.\\
 To make these comments more precise, let us denote by $f_K$ the density reconstructed by means of the maxentropic procedure, determined by a collection of $K$ moments. Below we shall see that when the difference in entropies between successive density reconstructions $f_K$ becomes smaller, then the $f_K$ converge in $L_1$ to a density $f_\infty,$ which in our case it will be proved to coincide with $f_S.$
This argument was used by Frontini and Tagliani (1997), in which they noted that successive integer moments lead to densities having entropies that decreased and became quite close, but their argument has a gap that is filled here.

The fact that the density can be recovered from little data is called superresolution. In our version of the Laplace transform inversion, this amounts to say that the Laplace transform con be inverted from the knowledge of its values at a few points. To mention a few references devoted to the superresolution phenomenon in the context of the maximum entropy method, consider Gamboa and Gassiat (1994) and Gamboa-Gassiat (1996), Lewis (1997) and more recently de Castro et al (2015). Superresolution is also used to describe the situation in which  fine detail in a signal is obtained from a response in which much less detail is available. As examples of work along this line, consider the work by Donoho and Stark (1989), Candes, Romberg and Tao (2005) and Candes and Fernandez-Granda (2012).

We should as well mention that the use of fractional moments to invert Laplace transforms, using a more algebraic approach combined with regularization techniques has received considerable attention. Consider for example the work by Dung et al (2006) and references therein. The possibility of replacing the problem of numerically inverting the Laplace transform from a few real values of the parameter, by transforming the problem into a fractional moment problem and then applying a maxentropic approach, seems to have been first proposed in Tagliani and Vel\'asquez (2003).\\
We also mention that the Laplace inversion problem is a particular case of the problem of solving integral equations. We cite at least two attempts at solving this generic problem using two quite different techniques, but both based on the maximum entropy method. Consider Mead (1986) for an application of the standard method of maximum entropy,  and Gamboa and Gzyl (1997) for an application of the method of maximum entropy in the mean. The superresolution problem was not addressed in any of these works.

\subsection{Problem statement}
To state the problem that concerns us here, observe that the change of variables $s \rightarrow y = e^{s}$ maps (\ref{prob1}) onto
\begin{equation}\label{prob2}
\mbox{Find}\;\;\;\;f_Y(y)\;\;\;\mbox{such that}\;\;\;E[Y^{\alpha_i}] = \int_0^1 y^{\alpha_i}f_Y(y)dy = \mu_i\;\;\;i=1...,K.
\end{equation}
\noindent where of course, $Y = e^{-S},$ and once $f_Y(y)$ is obtained, $f_S(s) = e^{-s}f_Y(e^{-s})$ provides us with the desired $f_S.$ When the $\alpha_i =i$ are the non negative integers,  it is known that there exits a unique density satisfying the constraint for all $i \geq 0,$ and the conditions for this to happen are established in a variety of places. See Karlin and Shapley (1953) or Shohat and Tamarkin (1950) for example. When the moments are positive real numbers, we have the following two results established by Lin (1997), based on a basic result asserting that an analytic function on the right half complex plane is determined by its values on a sequence of points having an accumulation point there. The result that we need from Lin is

\begin{theorem}\label{Lin}
Let $F_Y$ be the distribution of a random variable $Y$ taking values in $[0,1].$ Let $\alpha_n$ be a sequence of positive and distinct numbers satisfying $lim_{n\rightarrow\infty} \alpha_n = 0$ and $\sum_{n\geq1}\alpha_n = \infty.$ Then the sequence of moments $E[Y^{\alpha_n}]$ uniquely determines $F_Y.$
\end{theorem}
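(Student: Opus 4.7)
The plan is to recast the uniqueness claim as a rigidity statement for bounded analytic functions on the right half-plane, and then deploy Blaschke's theorem to exploit the divergence assumption $\sum_n \alpha_n = \infty$.

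For any probability distribution $F$ on $[0,1]$ I would set $\phi_F(z) = \int_0^1 y^z\, dF(y)$. Since $|y^z| = y^{\mathrm{Re}(z)} \le 1$ whenever $y \in (0,1]$ and $\mathrm{Re}(z) \ge 0$, dominated convergence together with Morera's theorem shows that $\phi_F$ is bounded by $1$ on the closed right half-plane and holomorphic on its interior. If two distributions $F$ and $\tilde F$ share the moments $\int y^{\alpha_n}\,dF = \int y^{\alpha_n}\,d\tilde F$ for every $n$, then $g := \phi_F - \phi_{\tilde F}$ is bounded and holomorphic on $\{\mathrm{Re}(z) > 0\}$ and vanishes at each $\alpha_n$.

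The natural impulse is to invoke the identity theorem; the complication is that the only accumulation point of $\{\alpha_n\}$ is $0$, which lies on the \emph{boundary} rather than the interior of the half-plane, and this is the central technical obstacle. To bypass it I would transport the problem to the unit disk via the Cayley map $T(w) = (1-w)/(1+w)$, which sends $\mathbb{D}$ onto the right half-plane and whose inverse carries $\alpha_n$ to $w_n = (1-\alpha_n)/(1+\alpha_n) \in (-1,1)$. A direct computation gives
\begin{equation*}
1 - |w_n| \;=\; \frac{2\alpha_n}{1+\alpha_n} \;\sim\; 2\alpha_n,
\end{equation*}
so the hypothesis $\sum_n \alpha_n = \infty$ is equivalent to the failure of the Blaschke summability condition $\sum_n (1-|w_n|) < \infty$. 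Since $h(w) := g(T(w))$ is a bounded holomorphic function on $\mathbb{D}$ whose zero set $\{w_n\}$ violates the Blaschke condition, standard $H^\infty(\mathbb{D})$ theory forces $h \equiv 0$, and hence $\phi_F \equiv \phi_{\tilde F}$ throughout the right half-plane.

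Evaluating at the positive integers then gives $\int_0^1 y^k\, dF = \int_0^1 y^k\, d\tilde F$ for every $k \ge 1$, and the (determinate) Hausdorff moment problem on the compact interval $[0,1]$ yields $F = \tilde F$. In summary, the role of $\alpha_n \to 0$ is to push the zeros of $g$ out to the boundary of the domain of analyticity; the real work is done by $\sum_n \alpha_n = \infty$, which through Blaschke's theorem excludes any nontrivial bounded holomorphic function with exactly those zeros.
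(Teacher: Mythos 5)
Your argument is correct, and it is worth noting that the paper itself offers no proof of this statement: Theorem \ref{Lin} is imported verbatim from Lin (1997), with only a one-line heuristic to the effect that an analytic function on the right half-plane is determined by its values on a sequence with an accumulation point ``there.'' Taken literally that is the identity theorem, and it does not apply here, because the only accumulation point of $\{\alpha_n\}$ is $0$, which sits on the \emph{boundary} of the half-plane --- precisely the obstacle you identify. Your route (pass to $\phi_F(z)=\int_0^1 y^z\,dF(y)$, bounded and holomorphic on $\{\mathrm{Re}\,z>0\}$, transport to the disk by the Cayley map, and kill the difference $\phi_F-\phi_{\tilde F}$ because its zeros $w_n=(1-\alpha_n)/(1+\alpha_n)$ violate the Blaschke condition when $\sum_n\alpha_n=\infty$) is exactly the M\"untz--Sz\'asz-type mechanism underlying Lin's result, and the final reduction to the determinate Hausdorff moment problem via the integer moments is clean. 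Two small points to tidy up: the identity $1-|w_n|=2\alpha_n/(1+\alpha_n)$ as written requires $w_n\ge 0$, i.e.\ $\alpha_n\le 1$, which holds for all but finitely many $n$ since $\alpha_n\to 0$ (finitely many terms do not affect the divergence of the Blaschke sum, but you should say so); and a possible atom of $F$ at $y=0$ contributes nothing to $\phi_F(z)$ for $\mathrm{Re}\,z>0$ under the convention $0^z=0$, so you should note that the mass at $0$ is recovered from the normalization $F(1)=1$ together with the determinacy of the restriction to $(0,1]$. With those remarks added, your write-up is a complete and self-contained proof of a statement the paper only cites.
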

So, let us suppose that we are given $\alpha_1>\alpha_2>...>\alpha_K$ are the first $K$ terms of a sequence satisfying the statement in Lin's theorem (\ref{Lin}). In the next section we shall see how the method of maximum entropy can be invoked to obtain a solution $f_K$ to the (truncated) fractional moment problem (\ref{prob1}) and in Section 3, we shall examine the convergence of the $f_K$ to $f_S.$ An interesting feature of the fractional moment problem, related to the original Laplace transform inversion problem, comes through Abelian/Tauberian theory, which relates the decay at large values of the original variable to small values of the transform parameter, and the behavior at small values of the original variable to large values of the transform parameter.

\section{The maximum entropy solution to the truncated moment problem}
The application of the maxentropic procedure to problems like (\ref{prob2}) is rather standard.
The formulation of the maximum entropy method as a variational problem seems to have been first proposed by Jaynes (1957), but it appears as well in the work by Kullback (1968). The mathematical nuances of the problem, in particular, the uses of duality were considered in Mead and Papanicolau (1984), Borwein and Lewis (1991). See Borwein and Lewis (2000) for a detailed exposition. To state our problem we follow the last cited authors and define
The entropy of a density $g(y)$ on $[0,1]$ is defined by $S:L_1([0,1],dx)\rightarrow[-\infty,+\infty)$ by
\begin{equation}\label{entr1}
S(g) = -\int_0^1 g(y)\ln g(y)dy
\end{equation}
\noindent or $-\infty$ when the integral fails to converge. The following result is established in Borwein and Lewis (2000).
\begin{lemma}\label{BL}
The entropy $S$ is a proper, upper semi-continuous, concave function of its domain
$$Dom(S) := \{ g \in L_1([0,1])| S(g) > -\infty\}$$
with weakly compact sets $\{g \in L_1([0,1]) | S(g) \geq a\},$ for all $a\in\mathbb{R}.$
\end{lemma}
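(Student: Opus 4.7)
My approach to this lemma is to decompose the statement into its four assertions---non-emptiness/finiteness of the domain (``proper''), concavity, upper semi-continuity, and weak compactness of the super-level sets---and to reduce each one to elementary properties of the scalar function $\eta(y) := -y\ln y$ extended by $\eta(0) = 0$. The facts about $\eta$ that I plan to use throughout are that $\eta$ is concave and continuous on $[0,\infty)$, bounded above by $\eta(1/e) = 1/e$, and satisfies $\eta(y) \to -\infty$ as $y \to \infty$. With these in hand, properness and concavity of $S$ are essentially immediate: the uniform density $g \equiv 1$ lies in the domain with $S(1) = 0$, so the domain is non-empty; integrating the pointwise bound $\eta(g(y)) \leq 1/e$ over $[0,1]$ gives $S(g) \leq 1/e < \infty$ for every non-negative $g \in L_1$; and concavity of $S$ transfers from pointwise concavity of $\eta$ by linearity of the integral.

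For upper semi-continuity I would work in the norm topology of $L_1([0,1])$. Given $g_n \to g$ in $L_1$, any subsequence has a further sub-subsequence $g_{n_k} \to g$ almost everywhere, and continuity of $\eta$ yields $\eta(g_{n_k}) \to \eta(g)$ a.e. The key point is that the uniform upper bound $\eta(g_{n_k}) \leq 1/e$ is integrable on the finite-measure interval $[0,1]$, so reverse Fatou applies and gives $\limsup_k S(g_{n_k}) \leq S(g)$; the usual subsequence principle promotes this to $\limsup_n S(g_n) \leq S(g)$. I would then note that each super-level set $\{g \in L_1 : S(g) \geq a\}$ is norm-closed as a consequence, and since it is convex by the concavity already established, Mazur's theorem upgrades this to weak closedness---which is what one wants as input to the compactness argument.

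The substantive step, and where I expect the main obstacle to lie, is the weak compactness. My plan is to invoke the Dunford-Pettis theorem on the finite measure space $[0,1]$, so that the task reduces to verifying norm-boundedness and uniform integrability of $\{g : S(g) \geq a\}$. For the norm bound I would decompose any admissible $g$ as $g = Mh$ with $M := \int_0^1 g \, dy$ and $h$ a probability density; a direct calculation gives $S(g) = -M\ln M + M\,S(h)$, and Gibbs' inequality forces $S(h) \leq 0$ for densities on $[0,1]$, so $S(g) \geq a$ implies $-M\ln M \geq a$ and thus $M$ is confined to a bounded interval. For uniform integrability I would apply the De La Vall\'ee Poussin criterion with the gauge $\phi(t) = t\ln^+ t$; the inequality $\int g\ln g \leq -a$ combined with the elementary estimate $\int_{\{g<1\}} |g\ln g|\,dy \leq 1/e$ (from $|\eta| \leq 1/e$ on $[0,1]$) yields a uniform upper bound on $\int g \ln^+ g\,dy$, which is exactly the De La Vall\'ee Poussin hypothesis. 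The delicate point here is the sign bookkeeping---because $g\ln g$ changes sign at $g = 1$, one must separate the two regions carefully before extracting the needed one-sided bound on the positive part. Combining weak closedness with Dunford-Pettis then yields the claimed weak compactness.
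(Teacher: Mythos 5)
Your proof is correct, but note that the paper itself offers no proof of this lemma: it is quoted directly from Borwein and Lewis (2000), so there is no internal argument to compare against, and your job here is really to fill in that citation. Your reconstruction is essentially the standard argument from that reference. Properness and concavity follow from the pointwise bound $-y\ln y\le 1/e$ and the pointwise concavity of $\eta$; norm upper semi-continuity via a.e.\ convergent subsequences and reverse Fatou is legitimate because the integrands are dominated \emph{above} by the integrable constant $1/e$ on the finite measure space $[0,1]$; weak closedness of the convex super-level sets then comes from Mazur; and weak compactness follows from Dunford--Pettis, with the $L_1$ bound coming from $S(g)\le -M\ln M$ (Gibbs' inequality applied to the normalized $h=g/M$) and uniform integrability from the de la Vall\'ee Poussin gauge $t\ln^+ t$ together with $\int_{\{g<1\}}|g\ln g|\,dy\le 1/e$. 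Two small points are worth making explicit to close the argument completely: first, the convention that $S(g)=-\infty$ unless $g\ge 0$ a.e., which is what guarantees that every element of a super-level set is non-negative (this is used tacitly both in transferring concavity from $\eta$ and in the Dunford--Pettis step, where $\|g\|_1=\int g$); second, that the gauge $\phi(t)=t\ln^+ t$ is indeed convex and superlinear despite the kink at $t=1$ (left derivative $0$, right derivative $1$), so the de la Vall\'ee Poussin criterion applies. With those remarks added, your argument is complete.
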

We shall also make use of the related concept of Kullback divergence (or relative entropy), defined on the class of densities by 
\begin{equation}\label{kull}
\mbox{For densities}\;\; f \;\;\mbox{and}\;\;g\;\;\in L_1([0,1])\;\;\mbox{set}\;\;\;K(f,g)=\int_0^1 f(y)\ln\left(f(y)/g(y)\right)dy.
\end{equation}
A property of $K(f,g)$ relevant for what comes below is contained in
\begin{lemma}\label{propkull}
The Kullback divergence satisfies\\
1) $K(f,g) \geq 0$ and $K(f,g)=0 \Leftrightarrow f=g\;\; a.e.$\\
2) $K(f,g) \geq (1/2)\|f - g\|^2.$
\end{lemma}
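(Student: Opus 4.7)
The plan is to handle the two parts separately, using standard information-theoretic tools.

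For part 1, I would invoke the elementary inequality $\ln t \le t-1$ (with equality iff $t=1$), applied pointwise with $t = g(y)/f(y)$ on the set $\{f>0\}$. Multiplying by $f(y)$ and integrating gives
$$-K(f,g) \;=\; \int_{\{f>0\}} f(y)\,\ln\!\frac{g(y)}{f(y)}\,dy \;\le\; \int_{\{f>0\}}\!\!\bigl(g(y)-f(y)\bigr)\,dy \;\le\; 0,$$
because both $f$ and $g$ are densities on $[0,1]$. This establishes $K(f,g)\ge 0$. The equality case in $\ln t\le t-1$ forces $g/f = 1$ a.e. on $\{f>0\}$, and combined with $\int f = \int g = 1$ this yields $f=g$ a.e.

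For part 2, the statement is Pinsker's inequality (with $\|\cdot\|$ interpreted as the $L^1$ norm, consistent with the $L^1$-convergence discussion that follows in the paper). My plan is the standard two-step argument. First, reduce to a two-point distribution: set $A = \{y:f(y)\ge g(y)\}$, $p = \int_A f\,dy$, $q = \int_A g\,dy$, so that $\|f-g\|_1 = 2(p-q)$. An application of the log-sum inequality to the partition $\{A,A^c\}$ yields
$$K(f,g) \;\ge\; p\ln\!\frac{p}{q} \;+\; (1-p)\ln\!\frac{1-p}{1-q} \;=:\; \kappa(p,q).$$
Second, prove the scalar inequality $\kappa(p,q) \ge 2(p-q)^2$ for $0\le q\le p\le 1$. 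I would do this by fixing $p$ and viewing $\psi(q) := \kappa(p,q) - 2(p-q)^2$ as a function of $q$: one checks $\psi(p)=0$ and computes $\psi'(q) = -\tfrac{p}{q} + \tfrac{1-p}{1-q} \cdot (-1) \cdot (\ldots) + 4(p-q)$, showing $\psi'(q)\le 0$ on $[0,p]$ so that $\psi(q)\ge \psi(p)=0$. Chaining the two steps gives $K(f,g)\ge 2(p-q)^2 = \tfrac{1}{2}\|f-g\|_1^2$.

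The main obstacle is the scalar inequality in the second step, since the reduction via the log-sum inequality is essentially a direct application of Jensen/convexity. The scalar bound itself is classical but requires a short calculus argument (or equivalently, one can derive it from the pointwise bound $(a-b)\ln(a/b) \ge 2(a-b)^2/(a+b)$ combined with the Cauchy--Schwarz inequality applied to $|f-g| = \sqrt{|f-g|}\cdot\sqrt{|f-g|}$ with appropriate weights). The rest of the argument is routine.
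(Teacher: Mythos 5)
Your proposal is correct, but it does substantially more than the paper, which offers no proof at all: the paper disposes of the lemma in one sentence, attributing part 1 to Jensen's inequality and part 2 to ``an exercise in Kullback (1968)'' (with Borwein--Lewis (1991) as an alternative source). Your part 1, via $\ln t \le t-1$ applied to $t = g/f$, is the standard Gibbs/Jensen argument the paper has in mind, and your handling of the equality case (forcing $g=f$ a.e.\ on $\{f>0\}$ and then using $\int g = \int f = 1$ to kill $g$ on $\{f=0\}$) is complete; the only unstated convention is that $K(f,g)=+\infty$ when $g$ vanishes on a set where $f>0$, in which case both claims are vacuous. Your part 2 is the classical Csisz\'ar--Kullback--Pinsker proof: reduction to the two-point partition $\{A,A^c\}$ with $A=\{f\ge g\}$ via the log-sum inequality, the identity $\|f-g\|_1 = 2(p-q)$, and the binary bound $\kappa(p,q)\ge 2(p-q)^2$. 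Your derivative computation is written a bit loosely (the clean form is $\psi'(q) = (p-q)\bigl(4 - \tfrac{1}{q(1-q)}\bigr) \le 0$ for $q\le p$, since $q(1-q)\le \tfrac14$), but the conclusion $\psi(q)\ge\psi(p)=0$ is right and the chain of inequalities delivers exactly $K(f,g)\ge \tfrac12\|f-g\|_1^2$. In short: where the paper delegates to references, you have supplied a correct, self-contained proof along the standard lines those references contain.
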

The first drops out of Jensen's inequality and the second is an exercise in Kullback (1968). Nevertheless, see Borwein and Lewis (1991) for a different proof. 

It is well known that the density that maximizes (\ref{entr1}) subject to the set of constraints in (\ref{prob2}), admits the representation
\begin{equation}\label{repr1}
f_K(y) = \frac{e^{-\langle\blambda^*,y^{\balpha}\rangle}}{Z(\blambda^*)}.
\end{equation}
\noindent we added the subscript $K$ to mean that $f_K$ solves (\ref{prob2}) and satisfies the constraint given by the first $K$ momenta. Here we denote by $y^{\balpha}$ the $K-$vector with components $y^{\alpha_i}: i=1,...,K,$ and $\langle\mathbf{a},\mathbf{b}\rangle$ denotes the standard Euclidean product of the two vectors. Duality theory enters to establish that $\blambda^*$ is to be obtained minimizing the (strictly) convex function
\begin{equation}\label{dual}
\Sigma(\blambda,\bmu) = \ln Z(\blambda) + \langle\blambda,\bmu\rangle,
\end{equation}
\noindent in this case, over all $\blambda \in \mathbb{R}^K.$ Not only that, when the minimum of $\Sigma(\blambda,\bmu)$ is reached at $\blambda^*,$ we have
\begin{equation}\label{entr2}
S(f_K) = -\int_0^1 f_K(y)\ln f_K(y)dy = \Sigma(\blambda^*,\mu) = \ln Z(\blambda^*) + \langle\blambda^*,\bmu\rangle .
\end{equation}
 
\section{The superresolution phenomenon}
Let us begin with the simple 
\begin{lemma}\label{comp}
Let $M > K$ and let $f_M$ and $f_K$ be the maxentropic solution of the truncated moment problems (\ref{prob2}). Then
$$K(f_M,f_K) = S(f_K) - S(f_M) \geq 0.$$
$$\|f_M - f_K\|^2 \leq 2K(f_M,f_K).$$
\end{lemma}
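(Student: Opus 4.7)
\medskip\noindent
\textbf{Proof plan.} The plan is to compute $K(f_M,f_K)$ directly using the closed-form representation (\ref{repr1}) of $f_K$, and to exploit the fact that since $M>K$, the density $f_M$ automatically satisfies the first $K$ moment constraints that define $f_K$. In other words,
\[
\int_0^1 y^{\alpha_i}f_M(y)\,dy = \mu_i\qquad\text{for }i=1,\dots,K,
\]
and this single observation drives the entire argument.

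\medskip\noindent
Substituting $f_K(y)=e^{-\langle\blambda^*,y^{\balpha}\rangle}/Z(\blambda^*)$ (with $\blambda^*$ the multiplier vector attached to the $K$-moment problem) into the definition (\ref{kull}) of $K(f_M,f_K)$ splits the integral into three pieces: the term $\int f_M\ln f_M = -S(f_M)$, the linear term $\bigl\langle\blambda^*,\int_0^1 y^{\balpha}f_M(y)\,dy\bigr\rangle$, and the constant $\ln Z(\blambda^*)$ (which survives intact because $f_M$ integrates to one). By the moment-matching observation the middle term equals $\langle\blambda^*,\bmu\rangle$, so the last two combine into $\Sigma(\blambda^*,\bmu)$, which by (\ref{entr2}) equals $S(f_K)$. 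This yields the identity $K(f_M,f_K)=S(f_K)-S(f_M)$. The inequality $K(f_M,f_K)\geq 0$ is then part 1 of Lemma \ref{propkull}, and $\|f_M-f_K\|^2\leq 2K(f_M,f_K)$ is part 2 rewritten.

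\medskip\noindent
There is no genuine obstacle; the proof is a direct verification. The only point worth highlighting is \emph{why} the cross-entropy $-\int f_M\ln f_K$ depends on $f_M$ only through its first $K$ fractional moments. This is the characteristic feature of exponential-family densities such as $f_K$: their log-density is affine in the sufficient statistics $y^{\balpha}$, so integrating it against any other density produces a linear functional of precisely those statistics. Consequently the conclusion would hold for \emph{any} density, not necessarily maxentropic, that matches $f_K$ on the first $K$ moments — a fact that will presumably be exploited in the convergence argument for $f_K\to f_S$ to follow.
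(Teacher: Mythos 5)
Your proof is correct and is essentially the paper's own argument made explicit: the identity comes from expanding $K(f_M,f_K)$ using the exponential form (\ref{repr1}) of $f_K$, noting that $f_M$ shares the first $K$ moments so the cross term collapses to $\Sigma(\blambda^*,\bmu)=S(f_K)$ via (\ref{entr2}), and the two inequalities are Lemma \ref{propkull}. The paper additionally remarks that nonnegativity can be seen directly from the maximality of $S(f_K)$ among densities with those $K$ moments, but that is a second justification of the same fact, not a different proof.
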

The second assertion is part of Lemma (\ref{propkull}), and the first identity in the first assertion follows from (\ref{entr2}). The inequality is backed by either Lemma (\ref{propkull}), but more importantly, it follows from the fact that both $f_K$ and $f_M$ share the first $K$ moments and $S(f_K)$ has the largest entropy among such densities. 

Let the fractional moments $\{\mu(\alpha_i)| i\geq 1\}$ be determined by a sequence $\{\alpha_i| i\geq 1\}$ satisfying the conditions of Lin's Theorem (\ref{Lin}). We now state a key assumption for the rest of the section.\\
\begin{equation}\label{assump}
\mathbf{Suppose}\;\; \mbox{that the density}\;\; f\;\;\mbox{ with moments}\;\; \mu(\alpha_i)\;\;\mbox{ has finite entropy}\;\; S(f).
\end{equation}
This assumption is similar to the finiteness assumption in Corollary 3.2 to Theorem 3.1 in Borwein-Lewis (1991). We now come to the main result of this note.
\begin{theorem}\label{theo1}
Suppose that (\ref{assump}) is in force. Then, with the notations introduced above we have:\\
{\b 1} $S(f_K)$ decreases to $S(f)$ as $K\rightarrow\infty.$\\
{\bf 2} $\|f_K - f\| \rightarrow 0$ as $K\rightarrow\infty.$
\end{theorem}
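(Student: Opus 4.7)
The strategy is to produce an $L_1$ limit of $\{f_K\}$ by exploiting Lemma~\ref{comp}, to identify this limit with $f$ via Lin's theorem, and finally to recover entropy convergence from upper semi-continuity of $S$. The groundwork is a simple monotonicity observation: since $f_{K+1}$ satisfies the first $K$ moment constraints, the maximal-entropy property of $f_K$ in that class yields $S(f_K)\geq S(f_{K+1})$; and since $f$ itself satisfies every moment constraint and has finite entropy by assumption~(\ref{assump}), we likewise have $S(f_K)\geq S(f)$ for all $K$. Thus $\{S(f_K)\}$ decreases to some limit $S_\infty\geq S(f)$.

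For part~2, I apply Lemma~\ref{comp} with $M>K$ to get
$$\|f_M-f_K\|^2 \leq 2K(f_M,f_K) = 2\bigl(S(f_K)-S(f_M)\bigr),$$
whose right side tends to $0$ as $K,M\to\infty$ because $\{S(f_K)\}$ is a convergent real sequence. Hence $\{f_K\}$ is Cauchy in $L_1([0,1])$, and so has some $L_1$-limit $f_\infty$. Because $y^{\alpha_i}$ is bounded by $1$ on $[0,1]$, I can pass to the limit in the moment integrals to conclude $\int_0^1 y^{\alpha_i}f_\infty(y)\,dy=\mu_i$ for every $i\geq 1$, while the total mass $\int f_\infty=1$ is preserved under $L_1$ convergence. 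Lin's Theorem~\ref{Lin} then forces $f_\infty=f$ almost everywhere, which gives $\|f_K-f\|\to 0$.

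For part~1, $L_1$ convergence implies weak convergence in $L_1$, so the upper semi-continuity of $S$ recorded in Lemma~\ref{BL} yields $\limsup_K S(f_K)\leq S(f)$. Combined with $S_\infty\geq S(f)$ from the first paragraph, this forces $S_\infty=S(f)$, i.e.\ $S(f_K)\downarrow S(f)$.

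The step I expect to be most delicate is the identification $f_\infty=f$, which is precisely the gap in the Frontini--Tagliani argument mentioned in the introduction. A decreasing sequence of entropies and an $L_1$-Cauchy sequence of densities can be produced from any nested family of moment constraints, but without a uniqueness theorem such as Lin's, one has no way to pin down the limit as the target density; the hypotheses on $\{\alpha_n\}$ (accumulation at $0$ and divergence of $\sum\alpha_n$) are therefore doing genuine work here.
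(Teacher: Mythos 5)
Your proof is correct and follows essentially the same route as the paper's: monotonicity of the entropies from the nested constraints together with assumption (\ref{assump}), the Pinsker-type bound of Lemma \ref{comp} to make $\{f_K\}$ Cauchy in $L_1$, boundedness of $y^{\alpha_i}$ to pass to the limit in the moments, and Lin's Theorem \ref{Lin} to identify the limit with $f$. The one place you go beyond the paper is part 1: the paper's proof only establishes that $S(f_K)$ decreases to some limit $\geq S(f)$ and then stops, whereas you invoke the upper semi-continuity of $S$ from Lemma \ref{BL} to force that limit to equal $S(f)$ — a step the paper leaves out but which is needed to justify the full claim of part 1.
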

\begin{proof}
An argument similar to the one used to prove Lemma (\ref{comp}), and from assumption (\ref{assump}) it readily follows that, since  $S(f_K) \geq S(f),$ then the decreasing sequence $S(f_K)$ converges. Therefore, from the second assertion in (\ref{comp}), it follows that there is a function $f_\infty$ such that $\|f_K-f_\infty\|\rightarrow 0.$ That $f_\infty$ integrates to $1$ is clear, and by taking limits along a subsequence if need be, we conclude that $f_\infty \geq 0,$ and therefore, that $f_\infty$ is a density.

Observe now that for any fixed $K$ and the corresponding  $\alpha_i, 1\leq i\leq K,$ we have
$$|\mu(\alpha_i) -\int_0^1 y^{\alpha_i}f_\infty(y)dy| = |\int_0^1 y^{\alpha_i}\left(f_M(y)-f_\infty(y)\right)dy| \leq \|f_M - f_\infty\| \rightarrow 0.$$
That is, the moments of $f_\infty$ coincide with those of $f,$ and according to Theorem (\ref{Lin}), we obtain that $f_\infty = f,$ thus concluding the proof.
\end{proof}\\
{\bf Comments} This result is similar to Theorem 3.1 in Borwein-Lewis (1991), and (\ref{assump}) is what allows us to make sure that the sequence of decreasing entropies $S(f_K)$ is actually a Cauchy sequence. This detail closes the gap in the argument in Frontini and Tagliani (1997), and provides another approach to the problem considered by Lewis (1996). 

As far as the application of Theorem (\ref{theo1}) to our numerical experiments goes, what we observed is that in going from $4$ to $8$ decreasing fractional moments, the entropy of the reconstructed densities changed very little, and when we had histograms to begin with, the fit of the maxentropic density to the histogram was quite good. This makes the application of the maxentropic procedure to the problem of reconstructing densities from Laplace transforms quite a convenient procedure compared to other numerical inversion techniques.


\begin{thebibliography}{}
 \bibitem{} Borwein, J and Lewis, A. (1991) {\it Convergence of best entropy estimates}, SIAM J. Optim. {\bf 1}, 191-205.
\bibitem{} Borwein, J.M. and Lewis, A.S., {\it Convex Analysis and Nonlinear Optimization} CMS-Springer, Berlin, (2000). 
\bibitem{} Candes, E., Romberg, J. and Tao,T. (2006) {Robust uncertainty principles: exact signal reconstruction from highly incomplete frequency information}, IEEE Trans. Inf. Theory, {\bf 52}, 489-509.
\bibitem{} Candes, E. and Fern\'andez-Granda, C. (2012) {\it Towards a mathematical theory of supperresolution}, Comm. Pure and Appl Math., {\bf 67}, 906-956.
\bibitem{} de Castro, Y., Gamboa, F. Henrion, D. and Lasserre, J.B. (2015) {\it Exact solutions to super resolution on semi-algebraic domains in higher domains}, arxXiv:1502.02436v1 [cs.IT] 9Feb2015.
\bibitem{} Donoho, D. ad Stark, P. (1989) {\it Uncertainty principles and signal recovery}, SIAM J. Appl. Math., {\bf 49},906-931.
\bibitem{} Dung, N., Huy, N.V., Quan, P.H and Trong, D.D. (2006) {\it A Hausdorff-like moment problem and the inversion of the Laplace transform}, Math. Nachr., {\bf 279}, 1147-1158.
\bibitem{} Frontini M., Tagliani A. (1997) {\it Entropy-convergence in Stieltjes and Hamburger moment problem},  Appl. Math. Comput., {\bf 88}, 39-51. 
\bibitem{}Gamboa, F. and Gassiat, E. (1994) {\it The maximum entropy method in the mean: Applications to linear programming and superresolution}, Math. Programming, {\bf 66}, 103-122.
\bibitem{}Gamboa, F. and Gassiat, E. (1996) {\it Sets of superresolution and maximum entropy in the mean}, SIAM J. Math. Anal., {\bf 27}, 1129-1152.
\bibitem{} Gamboa, F. and Gzyl, H. (1997) {\it Maxentropic solutions of linear Fredholm equations},  Mathl. and Comp. Modelling, {\bf 25}, 23-32.
\bibitem{} Gomes-Gon\c{c}alves, E., Gzyl, H. and Mayoral, S. (2015) {\it Two maxentropic approaches to determine the probability density of compound risk losses}, Ins.: Math. and Econ., {\bf 62}, 42-53.
\bibitem{} Gzyl, H., Novi-Inverardi, P.L. and Tagliani, A. (2013) {\it A comparison of numerical approaches to determine the severity of losses}, J. Operat. Risk, {\bf 8}, 3-15.
\bibitem{} Gzyl, H. and ter Horst, E. (2014) {Numerical determination of hitting time distributions from their Laplace transforms: Simple cases}, Physica A, {\bf 410}, 244-252.
\bibitem{} Jaynes,E.T.   Information theory and statistical physics. {\it The Phys. Rev.}, {\bf 1957},  620-630.
 \bibitem{}Karlin S., Shapley L.S. \textit{Geometry of moment spaces}, AMS Memoirs, \textbf{12}, Providence RI, (1953).
\bibitem {} Kullback S. \textit{ Information Theory and Statistics}, Dover, New York, (1968).
\bibitem{} Lewis, A. (1996) {\it Superresolution in the Markov moment problem}, Jour. Math. Anal. \& Appl., {\bf 197}, 774-780.
\bibitem{} Lin, G.D., (1997) {\it Characterization of distributions via moments}. Sankhya: The Indian J. Of Stats., {\bf 35}, 85-90.
\bibitem{} Mead, L.R. and Papanicolau, N. (1984) {\it Maximum entropy in the moment problem}.  J. Math. Phys., {\bf 25}, 2404-2417. 
\bibitem{} Mead, L.R. (1986) {\it Approximate solution of Fredholm integral equations by the maximum entropy method}, Jour. Math. Phys. {\bf 27}, 2903-2905.
\bibitem{} Shohat J.A., Tamarkin J.D {\it The problem of moments}, Math. Surveys, {\bf 1}, American Mathematical Society, Providence, RI, (1970).
\bibitem{} Tagliani, A. and Vel\'asquez, Y. (2003) {\it Numerical inversion of the Laplace transform via fractional moments}, Applied Math.. and Comp., {\bf 143}, 99–107.


\end{thebibliography}
\end{document}